\theoremstyle{plain}
\newtheorem{theorem}{Theorem}
\newtheorem{Prop}{Proposition}
\newtheorem{Lem}{Lemma}
\def\fl#1{\left\lfloor#1\right\rfloor}
\def\stif#1#2{\left[#1\atop#2\right]} 
\def\sts#1#2{\left\{#1\atop#2\right\}}
\def\sttf2#1#2{\left[\!\!\left[#1\atop#2\right]\!\!\right]}  
\def\stf3f#1#2{\left[\!\!\left[\!\!\left[#1\atop#2\right]\!\!\right]\!\!\right]} 
\def\stff4#1#2{\left[\!\!\left[\!\!\left[\!\!\left[#1\atop#2\right]\!\!\right]\!\!\right]\!\!\right]}
\def\stss2#1#2{\left\{\!\!\left\{#1\atop#2\right\}\!\!\right\}}
\begin{document}

\title[A $q$-multiple zeta-star function at roots of unity]{Some explicit values of a $q$-multiple zeta-star function at roots of unity} 

\author{Takao Komatsu}
\address{Institute of Mathematics\\ Henan Academy of Sciences\\ Zhengzhou 450046\\ China;  \linebreak
Department of Mathematics, Institute of Science Tokyo, 2-12-1 Ookayama, Meguro-ku, Tokyo 152-8551, Japan}
\email{komatsu@hnas.ac.cn}

\date{
}

\begin{abstract}
In this paper, we show some expressions of certain $q$-multiple zeta-star values at roots of unity.  These explicit formulas are expressed by using the determinants or Bell polynomials. Explicit formulas for other types of values can be found from recurrence relations obtained using generating functions.  
\medskip

\end{abstract}

\subjclass[2010]{Primary 11M32; Secondary 05A15, 05A19, 05A30, 11B37, 11B73}
\keywords{multiple zeta functions, $q$-Stirling numbers with higher level, complete homogeneous symmetric functions, Bell polynomials, determinant}

\maketitle

\section{Introduction}\label{sec:1}

In this paper, we consider the function 
\begin{equation}
\mathfrak Z_n^\star(q;m,s):=\sum_{1\le i_1\le i_2\le \dots\le i_m\le n-1}\frac{1}{(1-q^{i_1})^{s}(1-q^{i_2})^{s}\cdots(1-q^{i_m})^{s}}\,. 
\label{def:qomzv}
\end{equation} 
Then we show some fundamental ways to find the values, and give explicit expressions of the values of $\mathfrak Z_n^\star(\zeta_n;m,s)$ when $m$ or $s$ are comparatively small.    
When $n\to\infty$, the values in (\ref{def:qomzv}) may be called one of the $q$-multiple zeta-star values (see, e.g., \cite{OOZ,Takeyama09}).   

In previous papers (\cite{Ko24,Ko25}), as an application of the study of certain generalized Stirling numbers (\cite{Ko23,Ko24}), we considered a function  
\begin{equation}
\mathfrak Z_n(q;m,s):=\sum_{1\le i_1<i_2<\dots<i_m\le n-1}\frac{1}{(1-q^{i_1})^{s}(1-q^{i_2})^{s}\cdots(1-q^{i_m})^{s}}\,.   
\label{def:qmzv}
\end{equation}   
When $n\to\infty$, these values may be called one of the $q$-multiple zeta values (see, e.g., \cite{Bradley,Schlesinger,Tasaka21,Zhao,Zudilin}).  
Then when $q$ takes the root of unity $\zeta_n$ in (\ref{def:qmzv}), some explicit expressions can be given:  
\begin{align}
\mathfrak Z_n(\zeta_n;m,1)&=\frac{1}{m+1}\binom{n-1}{m}\,, 
\label{eq:zzm1}\\
\mathfrak Z_n(\zeta_n;m,2)&=\frac{1}{n(m+1)}\left(\binom{n-1}{m}+(-1)^m\binom{n-1}{2 m+1}\right) 
\label{eq:s2}\\  
&=\frac{2\cdot m!}{(2 m+2)!}\binom{n-1}{m}\sum_{k=0}^m\sttf2{2 m+2}{m+k+2}_1^{(m+1,1)}(-n)^k\,, 
\label{eq:ss2}
\end{align} 
where $\sttf2{n}{k}_1^{(r,1)}$ denotes the $r$-Stirling number (\cite{Broder}).  
We also have 
\begin{equation}
\mathfrak Z_n(\zeta_n;1,s)
=\left|\begin{array}{ccccc}
\frac{n-1}{2}&1&0&\cdots&\\ 
\frac{2}{3}\binom{n-1}{2}&\frac{n-1}{2}&1&&\vdots\\ 
\vdots&&\ddots&&0\\
\frac{s-1}{s}\binom{n-1}{s-1}&\frac{1}{s-1}\binom{n-1}{s-2}&\cdots&\frac{n-1}{2}&1\\ 
\frac{s}{s+1}\binom{n-1}{s}&\frac{1}{s}\binom{n-1}{s-1}&\cdots&\frac{1}{3}\binom{n-1}{2}&\frac{n-1}{2}\\ 
\end{array}
\right|\,.
\label{eq:zz-det2}
\end{equation} 
An expression of $\mathfrak Z_n(\zeta_n;m,s)$ ($s=3,4,\dots$) would be possible, but it looks to become very complicated.  

In addition, even though there is no direct expression of $\mathfrak Z_n(\zeta_n;m,s)$ ($m=2,3,\dots$), its value may be calculated by using the known ones.  

For example, when $m=2$, we can obtain 
$\mathfrak Z_n(\zeta_n;2,s)$ as  
\begin{equation}  
\mathfrak Z_n(\zeta_n;2,s)=\frac{1}{2}\left(\mathfrak Z_n(\zeta_n;1,s)^2-\mathfrak Z_n(\zeta_n;1,2 s)\right)\,.
\label{eq:m2s}
\end{equation} 
Similarly, for $m=3$,  
\begin{equation}  
\mathfrak Z_n(\zeta_n;3,s)=\frac{1}{6}\left(\mathfrak Z_n(\zeta_n;1,s)^3-3\mathfrak Z_n(\zeta_n;1,s)\mathfrak Z_n(\zeta_n;1,2 s)+2\mathfrak Z_n(\zeta_n;1,3 s)\right)\,.
\label{eq:m2s}
\end{equation}


It is noticed that, while many explicit formulas for the $q$-multiple zeta values are known, few explicit formulas for the $q$-multiple zeta-star values are known, even though the relationships involving the $q$-multiple zeta-star values are known (see, e.g., \cite{BTT18,BTT20,LP19}).  One of the reasons is that it is more difficult to handle the complete homogeneous symmetric functions corresponding to the  $q$-multiple zeta-star values than the elementary symmetric functions corresponding to the  $q$-multiple zeta values.

\section{$q$-generalized $(r,s)$-Stirling numbers} 

In \cite{Ko24}, $q$-generalized $(r,s)$-Stirling numbers of both kinds are introduced. These Stirling numbers are closely related to our $q$-multiple zeta function in (\ref{def:qmzv}) and $q$-multiple zeta-star function in (\ref{def:qomzv}).

Let $[n]_q$ denote the $q$-number, defined by 
$$
[n]_q=\frac{q^n-1}{q-1}\quad(q\ne 1)\,. 
$$  
Its $q$-factorial is given by $[n]_q!=[n]_q[n-1]_q\cdots[1]_q$ with $[0]_q!=1$. 
Let $r$ be a positive integer.    
The $q$-version of $r$-Stirling numbers of the first kind with higher level (level $s$) are denoted by $\sttf2{n}{k}_q^{(r,s)}$, and appear in the coefficients in the expansion 
\begin{equation}  
(x)_{n,q}^{(r,s)}=\sum_{k=0}^n(-1)^{n-k}\sttf2{n}{k}_q^{(r,s)}x^k\,,  
\label{def:qrsth1}  
\end{equation} 
where for $r,s\ge 1$, $(x)_{n,q}^{(r,s)}$ is defined by  
$$
(x)_{n,q}^{(r,s)}:=x^r\prod_{i=r}^{n-1}\bigl(x-([i]_q)^s\bigr)\quad(n>r)
$$ 
with $(x)_{r,q}^{(r,s)}=x^r$. 
When $r=s=1$, $s_q(n,k)=(-1)^{n-k}\sttf2{n}{k}_q^{(1,1)}$ are the signed $q$-Stirling numbers of the first kind (see, e.g., \cite{Ernst}), and $\sttf2{n}{k}_q=\sttf2{n}{k}_q^{(1,1)}$ are the unsigned $q$-Stirling numbers of the first kind. When $r=1$ and $q\to 1$, $\sttf2{n}{k}^{(s)}=\sttf2{n}{k}_1^{(1,s)}$ are the (unsigned) Stirling numbers of the first kind with higher level (\cite{KRV1}). When $r=s=1$ and $q\to 1$, $\stif{n}{k}=\sttf2{n}{k}_1^{(1,1)}$ are the unsigned Stirling numbers of the first kind.

The $q$-version of $r$-Stirling numbers of the second kind with higher level are denoted by $\stss2{n}{k}_q^{(r,s)}$, and appear in the coefficients in the expansion 
\begin{equation}  
x^n=\sum_{k=0}^n\stss2{n}{k}_q^{(r,s)}(x)_{k,q}^{(r,s)}\,.   
\label{def:qrsth2}  
\end{equation} 

When $r=1$ and $q\to 1$, $\stss2{n}{k}^{(s)}=\stss2{n}{k}_1^{(1,s)}$ are the Stirling numbers of the second kind with higher level, studied in \cite{KRV2}. 
When $r=s=1$, $S_q(n,k)=\stss2{n}{k}_q=(-1)^{n-k}\stss2{n}{k}_q^{(1,1)}$ are the signed $q$-Stirling numbers of the second kind (see, e.g., \cite{Ernst}). 
When $r=s=1$ and $q\to 1$, $\sts{n}{k}=\stss2{n}{k}_1^{(1,1)}$ are the classical Stirling numbers of the second kind.

From the definitions in \eqref{def:qrsth1} and \eqref{def:qrsth2}, the following recurrence relations hold.   
\begin{equation}  
\sttf2{n}{k}_q^{(r,s)}=\sttf2{n-1}{k-1}_q^{(r,s)}+\bigl([n-1]_q\bigr)^s\sttf2{n-1}{k}_q^{(r,s)}
\label{rec:qsth1} 
\end{equation}
with 
\begin{align*}  
&\sttf2{n}{k}_q^{(r,s)}=0\quad(0\le k\le r,\, n\ge k),\\ 
&\sttf2{n}{k}_q^{(r,s)}=0\quad(n<k),\quad \sttf2{0}{0}_q^{(r,s)}=1\,. 
\end{align*}
\begin{equation}  
\stss2{n}{k}_q^{(r,s)}=\stss2{n-1}{k-1}_q^{(r,s)}+\bigl([k]_q\bigr)^s\stss2{n-1}{k}_q^{(r,s)}
\label{rec:qsth2} 
\end{equation}
with 
\begin{align*}  
&\stss2{n}{k}_q^{(r,s)}=0\quad(0\le k\le r-1,~n\ge k),\quad \stss2{0}{0}_q^{(r,s)}=1,\\ 
&\stss2{n}{k}_q^{(r,s)}=0\quad(n\le k)\,. 
\end{align*}

By the definitions and/or the recurrence relations, we can get expressions with combinatorial summations (\cite{Ko24}).  

\begin{Lem}  
For $r\le m\le n-1$ and $r\ge 1$, we have 
$$ 
\sttf2{n}{m}_q^{(r,s)}=\left(\frac{[n-1]_q!}{[r-1]_q!}\right)^s\sum_{r\le i_1<\dots<i_{m-r}\le n-1}\frac{1}{\bigl([i_1]_q\dots[i_{m-r}]_q\bigr)^s}\,.
$$ 
For $n-m\ge r$ and $r\ge 1$, we have 
\begin{align*}
\sttf2{n}{n-m}_q^{(r,s)}&=\sum_{r\le i_1<\dots<i_m\le n-1}\bigl([i_1]_q\dots[i_m]_q\bigr)^s\\
&=\sum_{r\le i_1\le i_2\le\dots\le i_m\le n-m}([i_1]_q[i_2+1]_q\cdots[i_m+m-1]_q)^s\\
&=\sum_{i_m=r}^{n-m}([i_m+m-1]_q)^s\sum_{i_{m-1}=r}^{i_m}([i_{m-1}+m-2]_q)^s\cdots\sum_{i_{2}=r}^{i_3}([i_{2}+1]_q)^s\sum_{i_{1}=r}^{i_2}([i_{1}]_q)^s\,.
\end{align*}
\label{lem:exp11} 
\end{Lem}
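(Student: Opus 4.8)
The plan is to recognize $\sttf2{n}{m}_q^{(r,s)}$ as an elementary symmetric polynomial and then read off all three formulas from elementary manipulations. Writing $a_i:=([i]_q)^s$, I would first expand the defining product. From $(x)_{n,q}^{(r,s)}=x^r\prod_{i=r}^{n-1}(x-a_i)$ and the expansion of a product into elementary symmetric polynomials,
\[
(x)_{n,q}^{(r,s)}=x^r\sum_{j=0}^{n-r}(-1)^{j}e_{j}(a_r,\dots,a_{n-1})\,x^{\,n-r-j}=\sum_{k=r}^{n}(-1)^{n-k}e_{n-k}(a_r,\dots,a_{n-1})\,x^{k},
\]
where $e_j$ is the $j$-th elementary symmetric polynomial and I have set $k=n-j$. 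Comparing with the defining expansion \eqref{def:qrsth1} gives the key identity
\[
\sttf2{n}{m}_q^{(r,s)}=e_{n-m}\bigl(([r]_q)^s,([r+1]_q)^s,\dots,([n-1]_q)^s\bigr),
\]
valid for $r\le m\le n$. Everything else follows from this.

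For the first formula, I would apply the complementation identity for elementary symmetric polynomials: pairing each subset of $\{r,\dots,n-1\}$ with its complement gives, as an identity of rational functions in $q$,
\[
e_{n-m}(a_r,\dots,a_{n-1})=\Bigl(\prod_{i=r}^{n-1}a_i\Bigr)\,e_{(n-r)-(n-m)}\bigl(a_r^{-1},\dots,a_{n-1}^{-1}\bigr).
\]
Here $(n-r)-(n-m)=m-r$, and $\prod_{i=r}^{n-1}a_i=\bigl([n-1]_q!/[r-1]_q!\bigr)^s$, while $e_{m-r}(a_r^{-1},\dots,a_{n-1}^{-1})$ is exactly the sum $\sum_{r\le i_1<\dots<i_{m-r}\le n-1}\bigl([i_1]_q\cdots[i_{m-r}]_q\bigr)^{-s}$. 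Substituting these yields the stated identity.

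For the second formula, the first equality is just the key identity with $m$ replaced by $n-m$, since $e_m(a_r,\dots,a_{n-1})=\sum_{r\le i_1<\dots<i_m\le n-1}([i_1]_q\cdots[i_m]_q)^s$. For the second equality I would use the standard shift bijection $j_k:=i_k-(k-1)$, which sends strictly increasing tuples $r\le i_1<\dots<i_m\le n-1$ to weakly increasing tuples $r\le j_1\le\dots\le j_m\le n-m$ and replaces the factor $[i_k]_q$ by $[j_k+k-1]_q$. The third equality merely rewrites the weakly increasing constraint as nested sums, peeling off $j_m$ (ranging over $r,\dots,n-m$) first, then $j_{m-1}\le j_m$, and so on, with the product factoring so that $([j_k+k-1]_q)^s$ sits inside the sum over $j_k$.

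The argument is essentially bookkeeping once the key identity is in hand; the only points that genuinely need care are getting the complementary degree $m-r$ correct in the second step, and verifying in the third step that the shift bijection maps the endpoint constraints $i_1\ge r$ and $i_m\le n-1$ to $j_1\ge r$ and $j_m\le n-m$ respectively.
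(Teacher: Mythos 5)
Your proof is correct. The paper itself gives no internal proof of this lemma (it is imported from \cite{Ko24} with the remark that it follows ``by the definitions and/or the recurrence relations''), and your argument --- expanding $x^r\prod_{i=r}^{n-1}(x-([i]_q)^s)$ to identify $\sttf2{n}{m}_q^{(r,s)}$ with the elementary symmetric polynomial $e_{n-m}\bigl(([r]_q)^s,\dots,([n-1]_q)^s\bigr)$, then using the complementation identity (with complementary degree $(n-r)-(n-m)=m-r$) for the first formula and the shift bijection $j_k=i_k-(k-1)$ plus nested-sum rewriting for the second --- is exactly the natural ``by definition'' route, with all degree and endpoint bookkeeping checking out.
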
   

\begin{Lem}  
For $r+1\le k\le n$ and $r\ge 1$,  
\begin{multline*}
\stss2{n}{k}_q^{(r,s)}=\sum_{i_{k-r}=0}^{n-k}\bigl([k]_q\bigr)^{(n-k-i_{k-r})s}\sum_{i_{k-r-1}=0}^{i_{k-r}}\bigl([k-1]_q\bigr)^{(i_{k-r}-i_{k-r-1})s}\\
\cdots\sum_{i_2=0}^{i_3}\bigl([r+2]_q\bigr)^{(i_{3}-i_{2})s}\sum_{i_1=0}^{i_2}\bigl([r+1]_q\bigr)^{(i_{2}-i_{1})s}\bigl([r]_q\bigr)^{i_{1} s}\,. 
\end{multline*}
For $n-k\ge r\ge 1$, 
$$
\stss2{n}{n-k}_q^{(r,s)}=\sum_{r\le i_1\le i_2\le\cdots\le i_k\le n-k}([i_1]_q[i_2]_q\cdots[i_k]_q)^s\,. 
$$ 
\label{lem:iterated-sum} 
\end{Lem}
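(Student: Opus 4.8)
The plan is to recognize that both displayed formulas are two standard expansions of one and the same complete homogeneous symmetric polynomial, and then to prove that single identity by induction through the recurrence \eqref{rec:qsth2}. Write $x_j:=([j]_q)^s$, and let $h_d$ denote the complete homogeneous symmetric polynomial of degree $d$. I claim that both parts follow from
\[
\stss2{n}{k}_q^{(r,s)}=h_{n-k}(x_r,x_{r+1},\dots,x_k)\qquad(r\le k\le n).
\]
For the first formula, putting $b_0=i_1$, $b_j=i_{j+1}-i_j$ $(1\le j\le k-r-1)$ and $b_{k-r}=n-k-i_{k-r}$ turns the nested sum into $\sum x_r^{b_0}x_{r+1}^{b_1}\cdots x_k^{b_{k-r}}$, where the exponents telescope to $b_0+\dots+b_{k-r}=n-k$ and the chain $0\le i_1\le\dots\le i_{k-r}\le n-k$ is equivalent to $b_0,\dots,b_{k-r}\ge 0$; this is exactly $h_{n-k}(x_r,\dots,x_k)$. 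For the second formula, the substitution $k\mapsto n-k$ shows that $\sum_{r\le i_1\le\dots\le i_k\le n-k}x_{i_1}\cdots x_{i_k}$ equals $h_k(x_r,\dots,x_{n-k})$, which is the same identity with the roles of the degree and of the largest index interchanged. Hence it suffices to prove the single identity above.

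To establish it I would induct on $n$. Assuming the identity at level $n-1$, the induction hypothesis gives
\[
\stss2{n-1}{k-1}_q^{(r,s)}=h_{n-k}(x_r,\dots,x_{k-1}),\qquad \stss2{n-1}{k}_q^{(r,s)}=h_{n-k-1}(x_r,\dots,x_k),
\]
so the recurrence \eqref{rec:qsth2} becomes
\[
\stss2{n}{k}_q^{(r,s)}=h_{n-k}(x_r,\dots,x_{k-1})+x_k\,h_{n-k-1}(x_r,\dots,x_k).
\]
This is precisely the standard recurrence $h_d(y_1,\dots,y_p)=h_d(y_1,\dots,y_{p-1})+y_p\,h_{d-1}(y_1,\dots,y_p)$, obtained by splitting $h_d$ according to whether the largest variable $y_p=x_k$ occurs, read off with $d=n-k$ and $p=k-r+1$. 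Thus the interior inductive step closes term by term.

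For the base case $n=r$ the only value is $\stss2{r}{r}_q^{(r,s)}=1=h_0$, the leading coefficient in \eqref{def:qrsth2}. In the inductive step the edge $k=n$ gives $\stss2{n}{n}_q^{(r,s)}=\stss2{n-1}{n-1}_q^{(r,s)}=1$ (since $\stss2{n-1}{n}_q^{(r,s)}=0$), and the edge $k=r$ gives $\stss2{n}{r}_q^{(r,s)}=x_r\,\stss2{n-1}{r}_q^{(r,s)}=x_r^{\,n-r}$ (since $\stss2{n-1}{r-1}_q^{(r,s)}=0$ by the vanishing below level $r$); both agree with $h_{n-k}(x_r,\dots,x_k)$. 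I expect the only genuine obstacle to be keeping the two index conventions straight: in the first formula the variable list runs up to $[k]_q$ while the degree is $n-k$, whereas in the second it runs up to $[n-k]_q$ while the degree is $k$. Verifying that the telescoping of exponents in the first nested sum is exactly the composition-to-weakly-increasing-sequence bijection underlying the two standard forms of $h_{n-k}$ is routine, but it must be carried out carefully so that each sum is matched to the correct symmetric polynomial.
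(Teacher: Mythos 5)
Your proof is correct and follows essentially the route the paper indicates (the lemma is stated there as a consequence of the recurrence \eqref{rec:qsth2}, with details deferred to \cite{Ko24}): identifying $\stss2{n}{k}_q^{(r,s)}$ with the complete homogeneous symmetric polynomial $h_{n-k}\bigl(([r]_q)^s,\dots,([k]_q)^s\bigr)$ and verifying the standard recurrence $h_d = h_d(\text{fewer variables}) + x_k h_{d-1}$ by induction on $n$ is exactly the intended argument, and your reindexing bijections correctly show that the two displayed sums are the iterated-sum and monomial forms of the same polynomial. The base and edge cases you check are the right ones, so nothing is missing.
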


From Lemma \ref{lem:exp11} and Lemma \ref{lem:iterated-sum}, $q$-multiple zeta functions and $q$-multiple zeta-star functions can be written in terms of $q$-generalized $(1,s)$-Stirling numbers of the first kind and of the second kind, respectively.  

\begin{Prop}  
For positive integers $n$, $m$ and $s$, we have 
\begin{align*}
\mathfrak Z_n(q;m,s)&=\frac{1}{(1-q)^{m s}\bigl([n-1]_q!\bigr)^s}\sttf2{n}{m+1}_q^{(1,s)}\,,\\
\mathfrak Z_n^\star(q;m,s)&=\frac{1}{(1-q)^{m s}}\stss2{n+m-1}{n-1}_q^{(1,-s)}\,. 
\end{align*}
\label{prp:zeta-Stirling}
\end{Prop}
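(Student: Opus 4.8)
The plan is to reduce both identities directly to the combinatorial sum formulas of Lemma~\ref{lem:exp11} and Lemma~\ref{lem:iterated-sum}, after rewriting the denominators in terms of $q$-numbers. Since $[i]_q=(q^i-1)/(q-1)$, one has $1-q^i=(1-q)[i]_q$, so that $(1-q^i)^{-s}=(1-q)^{-s}([i]_q)^{-s}$ for every index. Substituting this into the definitions \eqref{def:qmzv} and \eqref{def:qomzv} pulls out the common scalar $(1-q)^{-ms}$ and turns each value into a bare sum of reciprocals of products of $q$-numbers over the relevant index set.

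For $\mathfrak Z_n(q;m,s)$ I would invoke the first formula of Lemma~\ref{lem:exp11} with $r=1$, where the sum already appears in reciprocal form. Taking the lower index in that lemma to be $m+1$ makes its index range $1\le i_1<\dots<i_m\le n-1$ coincide with the strictly increasing range of \eqref{def:qmzv}; since $[r-1]_q!=[0]_q!=1$, the lemma reads
\begin{equation*}
\sttf2{n}{m+1}_q^{(1,s)}=\bigl([n-1]_q!\bigr)^s\!\!\sum_{1\le i_1<\dots<i_m\le n-1}\frac{1}{\bigl([i_1]_q\cdots[i_m]_q\bigr)^s}\,.
\end{equation*}
Solving for the sum and reinstating the factor $(1-q)^{-ms}$ gives the first claimed identity.

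For the star value I would use the second formula of Lemma~\ref{lem:iterated-sum}, whose weakly increasing index range matches the range $i_1\le\dots\le i_m$ of \eqref{def:qomzv}. The crucial observation is that this formula produces direct products $([i_1]_q\cdots[i_k]_q)^s$, whereas the star sum needs their reciprocals; these are obtained by replacing the level $s$ by $-s$, which is legitimate because the displayed expression is an algebraic identity in the symbols $([i]_q)^{\pm s}$ and never used positivity of $s$. Choosing $k=m$ and upper summation bound $n-k=n-1$, i.e.\ taking the upper Stirling index to be $n+m-1$, turns the lemma into
\begin{equation*}
\stss2{n+m-1}{n-1}_q^{(1,-s)}=\sum_{1\le i_1\le\dots\le i_m\le n-1}\frac{1}{\bigl([i_1]_q\cdots[i_m]_q\bigr)^s}\,,
\end{equation*}
and multiplying by $(1-q)^{-ms}$ yields the second identity.

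Once the two lemmas are available the argument is essentially a change of notation, so the work lies entirely in the index bookkeeping: the promotion of the lower index to $m+1$ in the first-kind case, and the simultaneous choices $k=m$, upper index $n+m-1$, and $n-k=n-1$ in the second-kind case (for which the boundary hypothesis $n-k\ge r\ge 1$ reduces to the harmless $n-1\ge 1$). The one conceptual point worth stating explicitly is the asymmetry between the two formulas — the first kind furnishes reciprocal powers directly, while the second kind requires the substitution $s\mapsto -s$ — which is exactly why the superscript is $(1,s)$ for $\mathfrak Z_n$ but $(1,-s)$ for $\mathfrak Z_n^\star$. I expect this index alignment in the star case to be the main place where care is needed.
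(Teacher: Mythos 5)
Your proposal is correct and is essentially the paper's own argument: the paper derives Proposition~\ref{prp:zeta-Stirling} in exactly this way, by pulling out the factor $(1-q)^{-ms}$ via $1-q^i=(1-q)[i]_q$ and then matching the resulting sums to the first formula of Lemma~\ref{lem:exp11} (with $r=1$, lower index $m+1$) and the second formula of Lemma~\ref{lem:iterated-sum} (with $r=1$, $k=m$, upper index $n+m-1$, and $s\mapsto -s$). Your explicit justification of the substitution $s\mapsto -s$ is a point the paper leaves implicit, but the route is the same.
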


\section{The main result} 

Note that $\mathfrak Z_n^\star(q;1,s)=\mathfrak Z_n(q;1,s)$.  Hence, one can have interest in the case where $m\ge 2$.   
Then, in light of the result of $\mathfrak Z_n(q;m,s)$, when $q$ takes the root of unity,  we can get the following determinant expression.  

\begin{theorem} 
For integers $n,m$ with $n\ge 2$ and $m\ge 1$, we have 
\begin{align*}
&\mathfrak Z_n^\star(\zeta_n;m,s)\\
&=\frac{1}{m!}\mathbf Y_m\bigl(\mathfrak Z_n(\zeta_n;1,s),1!\mathfrak Z_n(\zeta_n;1,2 s),2!\mathfrak Z_n(\zeta_n;1,3 s),3!\mathfrak Z_n(\zeta_n;1,4 s),\dots)\bigr)\\
&=\frac{1}{m!}\left|\begin{array}{ccccc}
\mathfrak Z_n(\zeta_n;1,s)&-1&0&\cdots&\\ 
\mathfrak Z_n(\zeta_n;1,2 s)&\mathfrak Z_n(\zeta_n;1,s)&-2&&\vdots\\ 
\vdots&&\ddots&&0\\
\mathfrak Z_n\bigl(\zeta_n;1,(m-1)s\bigr)&\mathfrak Z_n\bigl(\zeta_n;1,(m-2)s\bigr)&\cdots&\mathfrak Z_n(\zeta_n;1,s)&-m+1\\ 
\mathfrak Z_n(\zeta_n;1,m s)&\mathfrak Z_n\bigl(\zeta_n;1,(m-1)s\bigr)&\cdots&\mathfrak Z_n(\zeta_n;1,2 s)&\mathfrak Z_n(\zeta_n;1,s)\\ 
\end{array}
\right|\,.
\end{align*} 
\label{th:mozv3}
\end{theorem}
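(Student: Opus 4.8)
The plan is to read both sides of the identity as standard symmetric-function data attached to the finite alphabet $x_i := (1-\zeta_n^i)^{-s}$, $1\le i\le n-1$. Since $\zeta_n$ is a primitive $n$-th root of unity, $\zeta_n^i\neq 1$ for $1\le i\le n-1$, so each $x_i$ is a well-defined nonzero number and the hypotheses $n\ge 2$, $m\ge 1$ merely guarantee that the alphabet is nonempty. With this notation the summation range $1\le i_1\le\cdots\le i_m\le n-1$ in \eqref{def:qomzv} says exactly that $\mathfrak Z_n^\star(\zeta_n;m,s)$ is the complete homogeneous symmetric polynomial $h_m(x_1,\dots,x_{n-1})$, while the single-variable values are the power sums:
\[
\mathfrak Z_n(\zeta_n;1,ks)=\sum_{i=1}^{n-1}\frac{1}{(1-\zeta_n^i)^{ks}}=\sum_{i=1}^{n-1}x_i^k=:p_k.
\]
(Equivalently one could route everything through Proposition~\ref{prp:zeta-Stirling} and the $q$-Stirling recurrences, but the symmetric-function reading is cleaner and uses $q=\zeta_n$ only for well-definedness.)

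First I would record the generating-function identity for the complete homogeneous polynomials and take its logarithm:
\[
\sum_{m\ge 0}h_m t^m=\prod_{i=1}^{n-1}\frac{1}{1-x_i t}=\exp\left(\sum_{k\ge 1}\frac{p_k}{k}\,t^k\right),
\]
the last step coming from $-\log(1-x_i t)=\sum_{k\ge1}x_i^k t^k/k$ and summing over $i$. Rewriting the exponent as $\sum_{k\ge 1}\bigl((k-1)!\,p_k\bigr)t^k/k!$ and comparing with the defining series $\exp\bigl(\sum_{k\ge1}a_k t^k/k!\bigr)=\sum_{m\ge0}\mathbf Y_m(a_1,\dots,a_m)t^m/m!$ of the complete Bell polynomials, the coefficient of $t^m$ reads off as
\[
h_m=\frac{1}{m!}\,\mathbf Y_m\bigl(0!\,p_1,\,1!\,p_2,\,2!\,p_3,\dots,(m-1)!\,p_m\bigr).
\]
Substituting $p_k=\mathfrak Z_n(\zeta_n;1,ks)$ is precisely the first displayed equality of the theorem.

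For the determinant I would differentiate the generating function: $H'(t)=H(t)\sum_{k\ge1}p_k t^{k-1}$, which is Newton's identity $m\,h_m=\sum_{k=1}^m p_k\,h_{m-k}$ with $h_0=1$. The Hessenberg matrix in the statement — first column $(p_1,\dots,p_m)^{\top}$, the value $p_1$ down the diagonal, the $p$-pattern below it, and $-1,-2,\dots,-(m-1)$ on the superdiagonal — is the classical power-sum determinant for $m!\,h_m$. I would confirm this by a cofactor (Hessenberg) expansion showing the determinant obeys the same recurrence, with the base case $m=1$ where both sides equal $p_1=\mathfrak Z_n(\zeta_n;1,s)$; an induction on $m$ then identifies the determinant with $m!\,h_m$, giving the second equality.

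The content is entirely classical, so the only real difficulty is bookkeeping: getting the factorial normalization of the Bell-polynomial arguments right (the $j$-th slot must carry $(j-1)!$, not $j!$ or $1$), and checking that the superdiagonal sign pattern $-1,-2,\dots,-(m-1)$ is exactly what turns the Hessenberg expansion into Newton's recurrence for $h_m$ rather than the elementary-symmetric recurrence (which would force the entries $+1,+2,\dots$). If one does not wish to cite the equivalence of the complete Bell polynomial with this power-sum determinant as known, that single verification — via the recurrence above — is the step requiring care.
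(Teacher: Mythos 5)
Your proposal is correct and is essentially the paper's own argument: the paper likewise identifies $\mathfrak Z_n^\star(\zeta_n;m,s)$ as the complete homogeneous symmetric function and $\mathfrak Z_n(\zeta_n;1,js)$ as the power sums of the alphabet $(1-\zeta_n^i)^{-s}$, then invokes Lemma~\ref{lem:obell} (Hoffman's Bell-polynomial identity) and Lemma~\ref{lem:gotrudi} (the Trudi/Newton determinant equivalences) with $b_m=\mathfrak Z_n^\star(\zeta_n;m,s)$ and $a_j=\mathfrak Z_n(\zeta_n;1,js)$. The only difference is that you prove those two classical facts inline via the generating function and a Hessenberg expansion, whereas the paper cites them.
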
  

For example, when $m=2$, 
we have 
\begin{equation}  
\mathfrak Z_n^\star(\zeta_n;2,s)=\frac{1}{2}\left(\mathfrak Z_n(\zeta_n;1,s)^2+\mathfrak Z_n(\zeta_n;1,2 s)\right)\,.
\label{eq:mmo2}
\end{equation}
Another way is 
\begin{equation}  
\mathfrak Z_n^\star(\zeta_n;2,s)=\mathfrak Z_n(\zeta_n;2,s)+\mathfrak Z_n(\zeta_n;1,2 s)\,.
\label{eq:mo2}
\end{equation}

Then for $s=1,2,3$ we have 
\begin{align*}  
\mathfrak Z_n^\star(\zeta_n;2,1)&=\frac{(n+1)(n-1)}{12}\,,\\ 
\mathfrak Z_n^\star(\zeta_n;2,2)&=\frac{(n+1)(n-1)(n-3)(n-7)}{2\cdot 5!}\,,\\ 
\mathfrak Z_n^\star(\zeta_n;2,3)&=-\frac{(n+1)(n-1)(n^4-650 n^2+3780 n-5291)}{12\cdot 7!}\,.
\end{align*}

For example, when $m=2$ and $s=3$, by using the known values of $\mathfrak Z_n(\zeta_n;2,3)$ (or $\mathfrak Z_n(\zeta_n;1,3)$) and $\mathfrak Z_n(\zeta_n;1,6)$, we have 
\begin{align*}
&\mathfrak Z_n^\star(\zeta_n;2,3)=\frac{1}{2}\left(\mathfrak Z_n(\zeta_n;1,3)^2+\mathfrak Z_n(\zeta_n;1,6)\right)\\
&=\frac{1}{2}\left(\frac{(n-1)^2(n-3)^2}{8^2}-\frac{(n-1)(2 n^5+2 n^4-355 n^3-355 n^2+11153 n-19087)}{12\cdot 7!}\right)\\
&=-\frac{(n+1)(n-1)(n^4-650 n^2+3780 n-5291)}{12\cdot 7!} 
\end{align*}
or 
\begin{align*}
&\mathfrak Z_n^\star(\zeta_n;2,3)=\mathfrak Z_n(\zeta_n;2,3)+\mathfrak Z_n(\zeta_n;1,6)\\
&=\frac{6(n-1)(n-2)(n^4+3 n^3+301 n^2-2883 n+6898)}{9!}\\
&\quad -\frac{(n-1)(2 n^5+2 n^4-355 n^3-355 n^2+11153 n-19087)}{12\cdot 7!}\\
&=-\frac{(n+1)(n-1)(n^4-650 n^2+3780 n-5291)}{12\cdot 7!}\,.
\end{align*}

In order to prove Theorem \ref{th:mozv3}, we need the following Lemmas (\cite[Proposition 1]{Hoffman17},\cite[Ch.I\S 2]{MacDonald95}). 
Notice that in the case of (\ref{def:qmzv}), the equivalent of elementary symmetric functions arises, whereas in the case of (\ref{def:qomzv}), the equivalent of complete homogeneous symmetric functions arises.  

\begin{Lem} 
Let $n$, $M$ and $K$ be positive integers with $M\ge K$. 
For $g_n:=a_1^n+a_2^n+\cdots+a_M^n$ we have 
$$
\sum_{1\le j_1\le j_2\le \dots\le j_K\le M}a_{j_1}a_{j_2}\cdots a_{j_K}=\frac{1}{K!}\mathbf Y_K(g_1,1!g_2,2! g_3,3! g_4,\dots)\,, 
$$
where $\mathbf Y_n(x_1,x_2,\dots,x_n)$ is the (complete exponential) Bell polynomial, defined by 
$$
\exp\left(\sum_{m=1}^\infty x_m\frac{t^m}{m!}\right)=1+\sum_{n=1}^\infty\mathbf Y_n(x_1,x_2,\dots,x_n)\frac{t^n}{n!}\,. 
$$ 
That is, 
\begin{align*}
&\mathbf Y_n(x_1,x_2,x_3,\dots,x_n)\\ 
&=\sum_{k=1}^n\sum_{i_1+2 i_2+\cdots+(n-k+1)i_{n-k+1}=n\atop i_1+i_2+i_3+\cdots=k}\frac{n!}{i_1!i_2!\cdots i_{n-k+1}!}\left(\frac{x_1}{1!}\right)^{i_1}\left(\frac{x_2}{2!}\right)^{i_2}\cdots\left(\frac{x_{n-k+1}}{(n-k+1)!}\right)^{i_{n-k+1}}
\end{align*} 
with $\mathbf Y_0=1$. 
\label{lem:obell}
\end{Lem}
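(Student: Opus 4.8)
The left-hand side is, by definition, the complete homogeneous symmetric polynomial $h_K(a_1,\dots,a_M)=\sum_{1\le j_1\le\cdots\le j_K\le M}a_{j_1}\cdots a_{j_K}$, while the numbers $g_k$ are the power sums of the same variables. The plan is therefore to prove the classical expansion of $h_K$ in terms of power sums by matching two exponential generating functions in a formal variable $t$, and then to read off the coefficient of $t^K$.

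First I would write the generating function of the $h_K$ as a product,
\[
\sum_{K\ge0}h_K(a_1,\dots,a_M)\,t^K=\prod_{i=1}^M\frac{1}{1-a_i t},
\]
and take logarithms: since $-\log(1-a_i t)=\sum_{k\ge1}(a_i t)^k/k$, summing over $i$ gives
\[
\log\prod_{i=1}^M\frac{1}{1-a_i t}=\sum_{k\ge1}\frac{g_k}{k}\,t^k,
\qquad\text{hence}\qquad
\sum_{K\ge0}h_K\,t^K=\exp\left(\sum_{k\ge1}\frac{g_k}{k}\,t^k\right).
\]
Next I would compare this with the defining generating function of the Bell polynomials. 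Choosing the arguments $x_m=(m-1)!\,g_m$ makes $x_m t^m/m!=g_m t^m/m$, so that $\exp(\sum_{m\ge1}x_m t^m/m!)$ coincides with the exponential on the right above, while the definition of $\mathbf Y_K$ supplies $\exp(\sum_{m\ge1}x_m t^m/m!)=1+\sum_{K\ge1}\mathbf Y_K(x_1,\dots,x_K)\,t^K/K!$.

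Equating the coefficients of $t^K$ on the two sides then yields exactly
\[
h_K=\frac{1}{K!}\,\mathbf Y_K\bigl(0!\,g_1,\,1!\,g_2,\,2!\,g_3,\dots\bigr)
=\frac{1}{K!}\,\mathbf Y_K\bigl(g_1,\,1!\,g_2,\,2!\,g_3,\dots\bigr),
\]
which is the claim. The one point requiring care, and the only place I would call an \emph{obstacle}, is the factorial bookkeeping in the substitution $x_m=(m-1)!\,g_m$: one must check that the normalization $t^m/m!$ in the Bell generating function against $t^k/k$ in the logarithm produces precisely the weights $(m-1)!$ on the $g_m$, so that the list fed to $\mathbf Y_K$ is $(g_1,1!g_2,2!g_3,\dots)$ and not some other rescaling. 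I would also remark that the hypothesis $M\ge K$ plays no role in the identity itself, which is a formal consequence of the two generating functions and holds for every $K\ge1$; it is relevant only to the combinatorial reading of the left-hand sum. As an alternative to the generating-function route, one could instead expand $\mathbf Y_K$ through its explicit multinomial formula and match monomials in the $g_k$ against the partition-indexed terms of $h_K$, but the generating-function comparison is shorter and makes the factorial normalization transparent.
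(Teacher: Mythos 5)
Your proof is correct. The paper does not prove this lemma itself---it cites \cite[Proposition 1]{Hoffman17} and \cite[Ch.I\S 2]{MacDonald95}---and your generating-function argument (writing $\sum_K h_K t^K=\prod_i(1-a_it)^{-1}=\exp\bigl(\sum_k g_k t^k/k\bigr)$ and matching against the Bell-polynomial exponential with $x_m=(m-1)!\,g_m$) is precisely the standard derivation found in those references, with the factorial bookkeeping handled correctly.
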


\begin{Lem}  
The following expressions are equivalent.  
\begin{enumerate}
\item[(i)] $\displaystyle b_m=\sum_{i_1+2 i_2+\cdots+m i_m=m\atop i_1,i_2,\dots,i_m\ge 0}\frac{1}{i_1!i_2!\cdots i_m!}\left(\frac{a_1}{1}\right)^{i_1}\left(\frac{a_2}{2}\right)^{i_2}\cdots\left(\frac{a_m}{m}\right)^{i_m}$
\item[(ii)] $\displaystyle b_m=\frac{1}{m!}\left|\begin{array}{ccccc}
a_1&-1&0&\cdots&0\\
a_2&a_1&-2&&\vdots\\
\vdots&&\ddots&&0\\
a_{m-1}&a_{m-2}&\cdots&a_1&-m+1\\
a_m&a_{m-1}&\cdots&a_2&a_1\\
\end{array}
\right|$ 
\item[(iii] $\displaystyle a_n=(-1)^{n-1}\left|\begin{array}{ccccc}
b_1&1&0&\cdots&0\\
2 b_2&b_1&1&&\vdots\\
\vdots&&\ddots&&0\\
(n-1)b_{n-1}&b_{n-2}&\cdots&b_1&1\\
n b_n&b_{n-1}&\cdots&b_2&b_1\\
\end{array}
\right|$ 
\item[(iv)] $\displaystyle m b_m=\sum_{i=1}^m a_i b_{m-i}$
\item[(v)] $\displaystyle a_n=-\sum_{j=1}^{n-1} b_j a_{n-j}+n b_n$ 
\end{enumerate}
\label{lem:gotrudi}
\end{Lem}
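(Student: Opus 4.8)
The plan is to reduce all five statements to one generating-function identity and to deduce the equivalences from it. Set $b_0=1$ and introduce the formal power series
\[
A(t)=\sum_{k\ge 1}\frac{a_k}{k}t^k,\qquad B(t)=\sum_{m\ge 0}b_m t^m,
\]
taking as the pivot the relation $B(t)=\exp\bigl(A(t)\bigr)$ (equivalently $A(t)=\log B(t)$). First I would expand
\[
\exp\bigl(A(t)\bigr)=\prod_{k\ge 1}\exp\!\left(\frac{a_k}{k}t^k\right)=\prod_{k\ge 1}\sum_{i_k\ge 0}\frac{1}{i_k!}\left(\frac{a_k}{k}\right)^{i_k}t^{k i_k}
\]
and extract the coefficient of $t^m$: the terms with $\sum_k k\,i_k=m$ reproduce exactly the multinomial sum in (i). Hence (i) is equivalent to the pivot relation.

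Next I would differentiate the pivot. From $B'=A'B$ with $A'(t)=\sum_{k\ge 1}a_k t^{k-1}$, comparing coefficients of $t^{m-1}$ yields
\[
m b_m=\sum_{i=1}^m a_i b_{m-i}\qquad(m\ge 1),
\]
which is precisely (iv); conversely this family of recurrences together with $b_0=1$ determines $B$ uniquely (a first-order linear recurrence with prescribed initial value has a unique solution) and recovers the pivot, so (iv) is equivalent to it. Isolating the term $i=m$ (where $b_0=1$) in the same convolution and solving for $a_m$ gives $a_n=n b_n-\sum_{j=1}^{n-1}b_j a_{n-j}$, which is (v); thus (iv) and (v) are two rearrangements of one identity and are equivalent.

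For the two determinants I would invoke the standard expansion of a lower Hessenberg determinant along its last row. Writing $D_m$ for the $m\times m$ determinant in (ii), its superdiagonal entries are $H_{j,j+1}=-j$ and its last-row entries are $H_{m,i}=a_{m-i+1}$, so the Hessenberg recurrence
\[
\det H_m=\sum_{i=1}^m(-1)^{m-i}H_{m,i}\Bigl(\prod_{j=i}^{m-1}H_{j,j+1}\Bigr)\det H_{i-1}
\]
specializes, after the cofactor sign $(-1)^{m-i}$ cancels the sign of $\prod_{j=i}^{m-1}(-j)=(-1)^{m-i}(m-1)!/(i-1)!$ and after reindexing $\ell=m-i+1$, to
\[
D_m=\sum_{\ell=1}^m a_\ell\,\frac{(m-1)!}{(m-\ell)!}\,D_{m-\ell},\qquad D_0=1.
\]
Dividing by $m!$ turns this into the recurrence (iv) for $b_m=D_m/m!$, and since $D_0/0!=1=b_0$ the sequences coincide, giving (ii)$\Leftrightarrow$(iv). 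The determinant in (iii) is again lower Hessenberg, but now with all superdiagonal entries equal to $1$, diagonal $b_1$, and first column $(b_1,2b_2,\dots,nb_n)^{\mathsf T}$; the same expansion (the empty sign product now being trivial) produces exactly the recurrence (v) for $(-1)^{n-1}\det$, giving (iii)$\Leftrightarrow$(v). Assembling the chain
\[
\text{(ii)}\Leftrightarrow\text{(iv)}\Leftrightarrow\text{(i)},\qquad \text{(iv)}\Leftrightarrow\text{(v)}\Leftrightarrow\text{(iii)}
\]
establishes the full equivalence.

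The main obstacle I anticipate is the bookkeeping in the Hessenberg expansions: one must track the product of superdiagonal factors $\prod_{j=i}^{m-1}(-j)$ together with the cofactor sign and then reindex to recognize the convolution, so an off-by-one in the factorial ratio $(m-1)!/(i-1)!$ or a missed sign would corrupt the match with (iv). Everything else is routine manipulation of formal power series, and the uniqueness of solutions to the recurrence is what lets me pass freely between the recurrences and the pivot relation in both directions.
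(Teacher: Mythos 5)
Your proof is correct, and it takes a genuinely different (and more self-contained) route than the paper. The paper does not actually write out an argument: it disposes of (i)$\Leftrightarrow$(ii) by citing Trudi's formula, of (iv)$\Leftrightarrow$(v) by citing an inversion formula from earlier work, and handles the remaining links with the one-line remark that they follow from ``expansions of the determinants.'' Your pivot $B(t)=\exp\bigl(A(t)\bigr)$ with $A(t)=\sum_{k\ge1}\frac{a_k}{k}t^k$ replaces both citations: expanding the exponential gives (i), differentiating gives (iv), and (v) then drops out as the $i=m$ term of (iv) isolated (using $b_0=1$), so no external inversion formula is needed. Your Hessenberg expansions are exactly what the paper means by ``expansions of the determinants,'' and the bookkeeping checks out: the cofactor sign $(-1)^{m-i}$ cancels $\prod_{j=i}^{m-1}(-j)=(-1)^{m-i}(m-1)!/(i-1)!$, giving $D_m=\sum_{\ell=1}^m a_\ell\frac{(m-1)!}{(m-\ell)!}D_{m-\ell}$, which is (iv) for $b_m=D_m/m!$; and in (iii), where the superdiagonal entries are all $1$, the signs $(-1)^{n-i}(-1)^{i}=(-1)^n$ combine to give precisely (v) for $a_n=(-1)^{n-1}\det$. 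What the paper's route buys is brevity, by outsourcing the two nontrivial links to classical results; what yours buys is a uniform derivation in which all five statements are read off from the single identity $\sum_{m\ge0}b_mt^m=\exp\bigl(\sum_{k\ge1}a_kt^k/k\bigr)$, which also explains conceptually why the lemma holds (it is the Newton-type exp/log relation between power sums and complete homogeneous symmetric functions that the surrounding theorem exploits). One point worth stating explicitly in your write-up: the normalization $b_0=1$, which your (iv)$\Leftrightarrow$(v) step and both determinant recursions rely on, is forced by (i) (the empty partition) but is nowhere declared in the lemma itself.
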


\noindent 
{\it Remark.}  
The relations between  (i) and (ii) are yielded from a simple modification of Trudi's formula (\cite[Vol.3, p.214]{Muir},\cite{Trudi}). 
By applying the inversion formula  (see, e.g., \cite[Lemma 1]{Ko25a},\cite[Theorem 1]{KR}), we can find that (iv) and (v) are equivalent.  
Others can be yielded by the expansions of the determinants.

\begin{proof}[Proof of Theorem \ref{th:mozv3}.]  
Put $b_m=\mathfrak Z_n^\star(\zeta_n;m,s)$ and $a_j=\mathfrak Z_n(\zeta_n;1,j s)$ in Lemma \ref{lem:gotrudi} by applying Lemma \ref{lem:obell}.  
\end{proof}

\section{The values $\mathfrak Z_n^\star(\zeta_n;m,s)$ for small $s$}  

In the previous section, we considered the values $\mathfrak Z_n^\star(\zeta_n;m,s)$ for small $m$.  In this section we shall consider the values $\mathfrak Z_n^\star(\zeta_n;m,s)$ for small $s$.     

$\mathfrak Z_n(\zeta_n;m,1)$ is expressed in a simple form as in (\ref{eq:zzm1}), but $\mathfrak Z_n^\star(\zeta_n;m,1)$ is not.  
\begin{align*}
\mathfrak Z_n^\star(\zeta_n;1,1)&=\frac{n-1}{2}\,,\\ 
\mathfrak Z_n^\star(\zeta_n;2,1)&=\frac{(n+1)(n-1)}{12}\,,\\ 
\mathfrak Z_n^\star(\zeta_n;3,1)&=\frac{(n+1)(n-1)}{24}\,,\\ 
\mathfrak Z_n^\star(\zeta_n;4,1)&=-\frac{(n+1)(n-1)(n^2-19)}{720}\,,\\ 
\mathfrak Z_n^\star(\zeta_n;5,1)&=-\frac{(n+1)(n-1)(n+3)(n-3)}{480}\,,\\ 
\mathfrak Z_n^\star(\zeta_n;6,1)&=\frac{(n+1)(n-1)(2 n^4-145 n^2+863)}{60480}\,,\\ 
\mathfrak Z_n^\star(\zeta_n;7,1)&=\frac{(n+1)(n-1)(n+5)(n-5)(2 n^2-11)}{24192}\,,\\ 
\mathfrak Z_n^\star(\zeta_n;8,1)&=-\frac{(n+1)(n-1)(3 n^6-497 n^4+9247 n^2-33953)}{3628800}\,,\\
\mathfrak Z_n^\star(\zeta_n;9,1)&=-\frac{(n+1)(n-1)(n+7)(n-7)(3 n^4-50 n^2+167)}{1036800}\,.
\end{align*}
As $\mathfrak Z_n^\star(\zeta_n;m,s)$ is a complete homogeneous symmetric polynomial, let us consider the generating function:  
\begin{align*}  
&\sum_{m=0}^{n-1}\mathfrak Z_n^\star(\zeta_n;m,s)X^m=\sum_{m=0}^\infty\mathfrak Z_n^\star(\zeta_n;m,s)X^m\\
&=\sum_{m=0}^\infty\sum_{1\le i_1\le i_2\le \dots\le i_m\le n-1}\frac{X^m}{(1-\zeta_n^{i_1})^{s}(1-\zeta_n^{i_2})^{s}\cdots(1-\zeta_n^{i_m})^{s}}\\
&=\prod_{j=1}^{n-1}\dfrac{1}{1-\dfrac{X}{(1-\zeta_n^j)^s}}
=n^s\prod_{j=1}^{n-1}\frac{1}{(1-\zeta_n^j)^s-X}\,.  
\end{align*}  
Let $\alpha_i$ ($i=1,2,\dots,s$) be the roots of the polynomial $(1-Y)^s-X$. Namely, 
\begin{equation}
\prod_{i=1}^s(\alpha_i-Y)=(1-Y)^s-X\,. 
\label{eq:alpha}
\end{equation}
Since by (\ref{eq:alpha}) 
$$
\prod_{i=1}^s(\alpha_i-1)=-X\,,  
$$ 
we get 
$$
\prod_{j=1}^{n-1}\prod_{i=1}^s(\alpha_i-\zeta_n^j)=\prod_{i=1}^s\frac{\alpha_i^n-1}{\alpha_i-1}=\frac{-1}{X}\prod_{i=1}^s(\alpha_i^n-1)\,. 
$$ 
Hence, 
\begin{equation}
\sum_{m=0}^{n-1}\mathfrak Z_n^\star(\zeta_n;m,s)X^m=-X n^s\prod_{i=1}^s\frac{1}{\alpha_i^n-1}\,. 
\label{eq:qmzsv11}
\end{equation}

\subsection{The values $\mathfrak Z_n^\star(\zeta_n;m,1)$}  

Let us consider the value $\mathfrak Z_n^\star(\zeta_n;m,s)$ when $s=1$. 
When $s=1,2$, it is simpler and faster to calculate the values directly, though there is a general method that is described in Subsection \ref{sub3.3}.  
  
When $s=1$, by $\alpha_1-Y=(1-Y)-X$ in (\ref{eq:alpha}), we see that $\alpha_1=1-X$.  
Thus, by (\ref{eq:qmzsv11}) we have 
\begin{align*}
\sum_{m=0}^{n-1}\mathfrak Z_n^\star(\zeta_n;m,1)X^m
&=n\prod_{j=1}^{n-1}\frac{1}{(1-\zeta_n^j)-X}\\
&=\frac{-X n}{\alpha_1^n-1}=\frac{X n}{1-(1-X)^n}\\
&=\frac{n}{\sum_{\kappa=0}^{n-1}(1-X)^\kappa}=\frac{n}{\sum_{k=0}^{n-1}\binom{n}{k+1}(-1)^{k}X^k}\\
&=\frac{1}{\sum_{k=0}^{n-1}\binom{n-1}{k}\frac{(-1)^{k}}{k+1}X^k}=\frac{1}{1-\sum_{k=1}^{n-1}\binom{n-1}{k}\frac{(-1)^{k+1}}{k+1}X^k}  
\end{align*}  
Hence, 
\begin{align*}
1&=\left(\sum_{k=0}^{n-1}\binom{n-1}{k}\frac{(-1)^{k}}{k+1}X^k\right)\left(\sum_{m=0}^{n-1}\mathfrak Z_n^\star(\zeta_n;m,1)X^m\right)\\
&=\sum_{l=0}^{n-1}\left(\sum_{j=0}^l\frac{(-1)^{l-j}}{l-j+1}\binom{n-1}{l-j}\mathfrak Z_n^\star(\zeta_n;j,1)\right)X^l\\
&\quad +\sum_{l=n}^{2 n-2}\left(\sum_{j=l-n+1}^{n-1}\frac{(-1)^{l-j}}{l-j+1}\binom{n-1}{l-j}\mathfrak Z_n^\star(\zeta_n;j,1)\right)X^l\,. 
\end{align*}  
Comparing the coefficients on both sides, we have the recurrence formula when $s=1$.  By applying this formula repeatedly, we can get explicit expressions of $\mathfrak Z_n^\star(\zeta_n;m,1)$ ($m=1,2,\dots,9$) as seen above.  

\begin{theorem}  
For $1\le m\le n-1$, we have 
$$
\mathfrak Z_n^\star(\zeta_n;m,1)=\sum_{j=0}^{m-1}\frac{(-1)^{m-j+1}}{m-j+1}\binom{n-1}{m-j}\mathfrak Z_n^\star(\zeta_n;j,1)
$$ 
with $\mathfrak Z_n^\star(\zeta_n;0,1)=1$.  
\label{th:qmozeta1}
\end{theorem}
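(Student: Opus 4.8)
The plan is to read off the recurrence directly from the generating-function identity established immediately before the statement, so that the whole argument is a formal power-series manipulation. First I would record the key fact derived above, namely that
$$\sum_{m=0}^{n-1}\mathfrak Z_n^\star(\zeta_n;m,1)X^m=\frac{1}{\sum_{k=0}^{n-1}\binom{n-1}{k}\frac{(-1)^{k}}{k+1}X^k}\,,$$
which, after clearing the denominator, becomes the product identity
$$\left(\sum_{k=0}^{n-1}\binom{n-1}{k}\frac{(-1)^{k}}{k+1}X^k\right)\left(\sum_{j=0}^{n-1}\mathfrak Z_n^\star(\zeta_n;j,1)X^j\right)=1\,.$$
This reduction rests only on the $s=1$ specialization $\alpha_1=1-X$ of (\ref{eq:alpha}) and the geometric-series evaluation in (\ref{eq:qmzsv11}), both already in hand, so I would treat it as given.

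Next I would expand the left-hand side as a Cauchy product and extract the coefficient of $X^m$ for each $m$ with $0\le m\le n-1$. Writing $k=m-j$, this coefficient equals
$$\sum_{j=0}^{m}\frac{(-1)^{m-j}}{m-j+1}\binom{n-1}{m-j}\mathfrak Z_n^\star(\zeta_n;j,1)\,.$$
Since the right-hand side is the constant $1$, its coefficient of $X^m$ is $1$ when $m=0$ and $0$ for every $m\ge 1$. The case $m=0$ simply confirms the normalization $\mathfrak Z_n^\star(\zeta_n;0,1)=1$, while for $1\le m\le n-1$ I obtain the vanishing relation in which the displayed sum is zero.

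The final step is to isolate the top index $j=m$. There the factor collapses to $\frac{(-1)^{0}}{1}\binom{n-1}{0}=1$, so the $j=m$ term is exactly $\mathfrak Z_n^\star(\zeta_n;m,1)$; moving the remaining terms with $0\le j\le m-1$ to the other side flips the sign, converting $(-1)^{m-j}$ into $(-1)^{m-j+1}$ and yielding precisely the claimed recurrence. I do not expect a genuine obstacle in this argument: the only points demanding care are the bookkeeping of the convolution indices under the substitution $k=m-j$ and the sign change produced by rearranging the vanishing relation. Thus the proof amounts to setting $b_m=\mathfrak Z_n^\star(\zeta_n;m,1)$, observing that the $\{b_m\}$ are the reciprocal coefficients of the explicit series above, and comparing coefficients.
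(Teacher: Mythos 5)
Your proposal is correct and coincides with the paper's own argument: the paper likewise clears the denominator in the $s=1$ generating-function identity, expands the Cauchy product, and compares coefficients of $X^l$ for $0\le l\le n-1$, isolating the top term to obtain the recurrence. The index substitution $k=m-j$ and the resulting sign $(-1)^{m-j+1}$ are handled exactly as in the text, so there is nothing to add.
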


\subsection{The values $\mathfrak Z_n^\star(\zeta_n;m,2)$}  

Let us consider the value $\mathfrak Z_n^\star(\zeta_n;m,s)$ when $s=2$.   
When $s=2$, by (\ref{eq:alpha}), we see that $\alpha_1+\alpha_2=2$ and $\alpha_1\alpha_2=1-X$.  
Thus, by (\ref{eq:qmzsv11}) we have 
\begin{align*}
\sum_{m=0}^{n-1}\mathfrak Z_n^\star(\zeta_n;m,2)X^m
&=n^2\prod_{j=1}^{n-1}\frac{1}{(1-\zeta_n^j)^2-X}\\
&=\frac{-X n^2}{(\alpha_1^n-1)(\alpha_2^n-1)}=\frac{-X n^2}{1-\bigl((1+\sqrt{X})^n+(1-\sqrt{X})^n\bigr)+(1-X)^n}\\
&=\frac{-X n^2}{1-2\sum_{\kappa=0}^{\fl{\frac{n}{2}}}\binom{n}{2\kappa}X^\kappa+\sum_{\kappa=0}^{n}\binom{n}{\kappa}(-X)^\kappa}\\  
&=\frac{-n^2}{-2\sum_{\kappa=1}^{\fl{\frac{n}{2}}}\binom{n}{2\kappa}X^{\kappa-1}+\sum_{\kappa=1}^{n}\binom{n}{\kappa}(-1)^\kappa X^{\kappa-1}}\\  
&=\frac{n^2}{2\sum_{\kappa=0}^{\fl{\frac{n}{2}}-1}\binom{n}{2\kappa+2}X^\kappa+\sum_{\kappa=0}^{n-1}\binom{n}{\kappa+1}(-1)^\kappa X^\kappa}\,.  
\end{align*}
Hence, 
\begin{align*}
1&=\left(\frac{2}{n^2}\sum_{\kappa=0}^{\fl{\frac{n}{2}}-1}\binom{n}{2\kappa+2}X^\kappa\right)\left(\sum_{m=0}^{n-1}\mathfrak Z_n^\star(\zeta_n;m,2)X^m\right)\\
&\quad+\left(\frac{1}{n^2}\sum_{\kappa=0}^{n-1}\binom{n}{\kappa+1}(-1)^\kappa X^\kappa\right)\left(\sum_{m=0}^{n-1}\mathfrak Z_n^\star(\zeta_n;m,2)X^m\right)\\
&=\sum_{l=0}^{\fl{\frac{n}{2}}-1}\left(\sum_{j=0}^l\frac{2}{n^2}\binom{n}{2 l-2 j+2}\mathfrak Z_n^\star(\zeta_n;j,2)\right)X^l\\
&\quad+\sum_{l=0}^{n-1}\left(\sum_{j=0}^l\frac{(-1)^{l-j}}{n^2}\binom{n}{l-j+1}\mathfrak Z_n^\star(\zeta_n;j,2)\right)X^l\\
&\quad +R(n,l)\,,
\end{align*}  
where $R(n,l)$ implies the terms for $l\ge\fl{\frac{n}{2}}$ and for $l\ge n$, respectively.  
Comparing the coefficients on both sides, we have for $1\le l\le\fl{\frac{n}{2}}-1$ 
$$
\sum_{j=0}^l\frac{1}{n^2}\left(2\binom{n}{2 l-2 j+2}+(-1)^{l-j}\binom{n}{l-j+1}\right)\mathfrak Z_n^\star(\zeta_n;j,2)=0\,. 
$$
Hence, we have the following.  

\begin{theorem}  
For $1\le m\le\fl{\frac{n}{2}}-1$, we have 
$$
\mathfrak Z_n^\star(\zeta_n;m,2)=-\sum_{j=0}^{m-1}\frac{1}{n^2}\left(2\binom{n}{2 m-2 j+2}+(-1)^{m-j}\binom{n}{m-j+1}\right)\mathfrak Z_n^\star(\zeta_n;j,2)
$$ 
with $\mathfrak Z_n^\star(\zeta_n;0,2)=1$.   
\label{th:qmozeta2}
\end{theorem}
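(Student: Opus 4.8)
The plan is to read the recurrence directly off the generating-function identity (\ref{eq:qmzsv11}) specialized to $s=2$, mirroring the treatment of the $s=1$ case that precedes it. First I would record the two relations that (\ref{eq:alpha}) forces when $s=2$, namely $\alpha_1+\alpha_2=2$ and $\alpha_1\alpha_2=1-X$, and solve them explicitly as $\alpha_{1,2}=1\pm\sqrt X$. Substituting into (\ref{eq:qmzsv11}) gives
$$
\sum_{m=0}^{n-1}\mathfrak Z_n^\star(\zeta_n;m,2)X^m=\frac{-X n^2}{(\alpha_1^n-1)(\alpha_2^n-1)}\,,
$$
so the next step is to expand the denominator. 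Writing $(\alpha_1^n-1)(\alpha_2^n-1)=1-(\alpha_1^n+\alpha_2^n)+(\alpha_1\alpha_2)^n$ and applying the binomial theorem to $\alpha_1^n+\alpha_2^n=(1+\sqrt X)^n+(1-\sqrt X)^n$ (only even powers of $\sqrt X$ survive, producing $2\sum_\kappa\binom{n}{2\kappa}X^\kappa$) and to $(1-X)^n$ turns the denominator into an honest power series in $X$, exactly as displayed in the lines preceding the statement.

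With the generating function written as $n^2$ divided by this power series, I would then clear denominators: the identity becomes $1$ equated to the Cauchy product of $\frac{1}{n^2}\bigl(2\sum_\kappa\binom{n}{2\kappa+2}X^\kappa+\sum_\kappa\binom{n}{\kappa+1}(-1)^\kappa X^\kappa\bigr)$ with $\sum_m\mathfrak Z_n^\star(\zeta_n;m,2)X^m$. Extracting the coefficient of $X^l$ and, crucially, restricting to the range $1\le l\le\fl{\frac n2}-1$ where neither binomial upper index overruns $n$ (so that the boundary terms collected as $R(n,l)$ do not yet contribute), I obtain the homogeneous relation
$$
\sum_{j=0}^l\frac{1}{n^2}\left(2\binom{n}{2 l-2 j+2}+(-1)^{l-j}\binom{n}{l-j+1}\right)\mathfrak Z_n^\star(\zeta_n;j,2)=0\,.
$$

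The final step is to isolate the top term. When $j=l$ the bracketed coefficient equals $2\binom n2+\binom n1=n^2$, so the $j=l$ contribution is precisely $\mathfrak Z_n^\star(\zeta_n;l,2)$ with coefficient $1$; moving the remaining $j<l$ terms across and writing $m$ for $l$ yields the stated recurrence, while the initial value $\mathfrak Z_n^\star(\zeta_n;0,2)=1$ is immediate from (\ref{def:qomzv}). The only genuinely delicate point is the bookkeeping of the summation ranges: I must verify that for $l\le\fl{\frac n2}-1$ the indices $2l-2j+2\le n$ and $l-j+1\le n$ hold for every $0\le j\le l$, which is exactly what confines the clean recurrence to $m\le\fl{\frac n2}-1$ and quarantines the $R(n,l)$ terms that would otherwise spoil the coefficient comparison.
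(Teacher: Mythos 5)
Your proposal is correct and follows essentially the same route as the paper: specialize (\ref{eq:qmzsv11}) to $s=2$ with $\alpha_{1,2}=1\pm\sqrt X$, expand $(\alpha_1^n-1)(\alpha_2^n-1)$ into a power series in $X$, form the Cauchy product, and compare coefficients of $X^l$ for $1\le l\le\fl{\frac n2}-1$ where the truncated sums still cover all $0\le j\le l$. Your explicit check that the $j=l$ coefficient equals $2\binom n2+\binom n1=n^2$ is the right way to isolate the top term, a detail the paper leaves implicit.
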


By applying this formula repeatedly, we can get explicit expressions of $\mathfrak Z_n^\star(\zeta_n;m,2)$ ($m=1,2,3,4,5$) as follows.  
\begin{align*} 
\mathfrak Z_n^\star(\zeta_n;1,2)&=-\frac{(n-1)(n-5)}{12}\,,\\
\mathfrak Z_n^\star(\zeta_n;2,2)&=\frac{(n+1)(n-1)(n-3)(n-7)}{240}\,,\\
\mathfrak Z_n^\star(\zeta_n;3,2)&=-\frac{(n+1)(n-1)(10 n^4-126 n^3+241 n^2+1134 n-2699)}{12\cdot 7!}\,,\\
\mathfrak Z_n^\star(\zeta_n;4,2)&=\frac{(n+1)(n-1)(21 n^6-300 n^5+181 n^4+9150 n^3-21071 n^2-41250 n+103669)}{10!}\\
\mathfrak Z_n^\star(\zeta_n;5,2)&=-\frac{(n+1)(n-1)(n-3)}{4\cdot 11!}\times(30 n^7-372 n^6-1889 n^5\\
&\qquad +24671 n^4+18790 n^3-346648 n^2-57251 n+1088429)\,. 
\end{align*}

\subsection{The cases for $s\ge 3$}\label{sub3.3}   

It is not easy to find an explicit expression of $\mathfrak Z_n^\star(\zeta_n;m,s)$ when $s\ge 3$ though it is possible theoretically.   

The following method was hinted by Henrik Bachmann directly when the author visited his university in 2024 (\cite{BTT18,BTT20}).   
In order to calculate $\prod_{i=1}^s(\alpha_i^n-1)^{-1}$, we consider the function 
\begin{align*}  
&f(X,Y):=\sum_{n=1}^\infty\frac{Y^n}{n}\prod_{i=1}^s(\alpha_i^n-1)\\
&=\sum_{n=1}^\infty\frac{Y^n}{n}\sum_{l=0}^s(-1)^{s-l}\sum_{1\le i_1<\cdots<i_l\le s}(\alpha_{i_1}\cdots\alpha_{i_l})^n\\ 
&=(-1)^{s-1}\sum_{l=0}^s(-1)^{l-1}\sum_{1\le i_1<\cdots<i_l\le s}\sum_{n=1}^\infty\frac{(\alpha_{i_1}\cdots\alpha_{i_l}Y)^n}{n}\\ 
&=(-1)^{s-1}\sum_{l=0}^s(-1)^{l}\sum_{1\le i_1<\cdots<i_l\le s}\log(1-\alpha_{i_1}\cdots\alpha_{i_l}Y)\\
&=(-1)^{s-1}\log\left(\prod_{l=0}^s F_{s,l}(X,Y)^{(-1)^l}\right)\,,
\end{align*}  
where 
\begin{equation}  
F_{s,l}(X,Y):=\prod_{1\le i_1<\cdots<i_l\le s}(1-\alpha_{i_1}\cdots\alpha_{i_l}Y)\quad(1\le l\le s) 
\label{def:ff}
\end{equation}
with $F_{s,0}(X,Y)=1-Y$.   
Here, $F_{s,l}(X,Y)$'s include the elementary symmetric functions but can be determined from (\ref{eq:alpha}) as the complete homogeneous symmetric functions. 

For example, we can find that 
\begin{align*}
F_{1,1}(X,Y)&=1-Y+X Y\,,\\ 
F_{2,1}(X,Y)&=(1-Y)^2-X Y^2\,,\\
F_{2,2}(X,Y)&=1-Y+X Y\,,\\ 
F_{3,1}(X,Y)&=(1-Y)^3+X Y^3\,,\\
F_{3,2}(X,Y)&=(1-Y)^3-3 X Y^2-(X^2-2 X)Y^3\,,\\
F_{3,3}(X,Y)&=1-Y+X Y\,. 
\end{align*}
Though it becomes more complicated, using $F_{1,1}(X,Y)$, or $F_{2,1}(X,Y)$ and $F_{2,2}(X,Y)$, we can find the expression $\prod_{i=1}^s(\alpha_i^n-1)$ for $s=1$ or $s=2$, respectively. 

Let us find the expression $\prod_{i=1}^s(\alpha_i^n-1)$ for $s=3$ by using $F_{3,1}(X,Y)$, $F_{3,2}(X,Y)$ and $F_{3,3}(X,Y)$. 

\begin{align*}  
&f(X,Y)=(-1)^{3-1}\log\left(\prod_{l=0}^3 F_{3,l}(X,Y)^{(-1)^l}\right)\\
&=\log\frac{(1-Y)\bigl((1-Y)^3-3 X Y^2-(X^2-2 X)Y^3\bigr)}{\bigl((1-Y)^3+X Y^3\bigr)(1-Y+X Y)}\\
&=\log\left(1-\frac{3 X Y^2+(X^2-2 X)Y^3}{(1-Y)^3}\right)-\log\left(1+\frac{X Y^3}{(1-Y)^3}\right)-\log\left(1+\frac{X Y}{1-Y}\right)\\
&=-\sum_{n=1}^\infty\frac{1}{n}\left(\frac{3 X Y^2+(X^2-2 X)Y^3}{(1-Y)^3}\right)^n-\sum_{n=1}^\infty\frac{(-1)^{n+1}}{n}\left(\frac{X Y^3}{(1-Y)^3}\right)^n-\sum_{n=1}^\infty\frac{(-1)^{n+1}}{n}\left(\frac{X Y}{1-Y}\right)^n\\
&=-\sum_{n=1}^\infty\frac{X^n Y^{2 n}\bigl(3+(X-2)Y\bigr)^n}{n}\sum_{j=0}^\infty\binom{3 n+j-1}{3 n-1}Y^j\\
&\quad +\sum_{n=1}^\infty\frac{(-1)^n X^n Y^{3 n}}{n}\sum_{j=0}^\infty\binom{3 n+j-1}{3 n-1}Y^j+\sum_{n=1}^\infty\frac{(-1)^n X^n Y^{n}}{n}\sum_{j=0}^\infty\binom{n+j-1}{n-1}Y^j\\
&=-\sum_{n=1}^\infty\frac{X^n Y^{2 n}}{n}\sum_{k=0}^n\binom{n}{k}3^{n-k}(X-2)^k Y^k\sum_{j=0}^\infty\binom{3 n+j-1}{3 n-1}Y^j\\
&\quad +\sum_{n=1}^\infty\frac{(-1)^n X^n}{n}\sum_{\nu=3 n}^\infty\binom{\nu-1}{3 n-1}Y^\nu+\sum_{n=1}^\infty\frac{(-1)^n X^n}{n}\sum_{\nu=n}^\infty\binom{\nu-1}{n-1}Y^\nu\\
&=-\sum_{\nu=2}^\infty Y^\nu\sum_{\mu=1}^\infty\sum_{k=0}^\mu\frac{X^{\mu}}{\mu}\binom{\mu}{k}3^{\mu-k}\sum_{l=0}^k\binom{k}{l}X^j(-2)^{k-l}\binom{\nu+\mu-k-1}{3\mu-1}\\
&\quad +\sum_{\nu=3}^\infty Y^\nu\sum_{m=1}^{\fl{\frac{\nu}{3}}}\binom{\nu-1}{3 m-1}\frac{(-1)^m X^m}{m}+\sum_{\nu=1}^\infty Y^\nu\sum_{m=1}^{\nu}\binom{\nu-1}{m-1}\frac{(-1)^m X^m}{m}\\
&=-\sum_{\nu=2}^\infty Y^\nu\sum_{m=1}^\infty X^m\sum_{l=0}^{\fl{\frac{m}{2}}}\sum_{k=l}^{m-l}\frac{3^{m-l-k}(-2)^{k-l}}{m-l}\binom{m-l}{k}\binom{k}{l}\binom{\nu+m-l-k-1}{3 m-3 l-1}\\
&\quad +\sum_{n=3}^\infty Y^n\sum_{m=1}^{\fl{\frac{n}{3}}}\binom{n-1}{3 m-1}\frac{(-1)^m X^m}{m}+\sum_{n=1}^\infty Y^n\sum_{m=1}^{n}\binom{n-1}{m-1}\frac{(-1)^m X^m}{m}\,.
\end{align*}  
Comparing the coefficients, we have 
\begin{align*}  
&\prod_{i=1}^3(\alpha_i^n-1)\\
&=n\sum_{m=0}^{\fl{\frac{n}{3}}-1}\binom{n-1}{3 m+2}\frac{(-1)^{m+1}X^{m+1}}{m+1}+n\sum_{m=0}^{n-1}\binom{n-1}{m}\frac{(-1)^{m+1}X^{m+1}}{m+1}\\
&\quad -n\sum_{m=0}^\infty X^{m+1}\sum_{l=0}^{\fl{\frac{m+1}{2}}}\sum_{k=l}^{m-l+1}\frac{3^{m-l-k+1}(-2)^{k-l}}{m-l+1}\binom{m-l+1}{k}\binom{k}{j}\binom{n+m-l-k}{3 m-3 l+2}\,.
\end{align*}  

Thus, by (\ref{eq:qmzsv11}) we have 
\begin{align*}  
n^2&=\left(\sum_{m=0}^{n-1}\mathfrak Z_n^\star(\zeta_n;m,3)X^m\right)\biggl(\sum_{m=0}^{\fl{\frac{n}{3}}-1}\binom{n-1}{3 m+2}\frac{(-1)^{m}X^{m}}{m+1}+\sum_{m=0}^{n-1}\binom{n-1}{m}\frac{(-1)^{m}X^{m}}{m+1}\\
&\qquad +\sum_{m=0}^\infty X^{m}\sum_{l=0}^{\fl{\frac{m+1}{2}}}\sum_{k=l}^{m-l+1}\frac{3^{m-l-k+1}(-2)^{k-l}}{m-l+1}\binom{m-l+1}{k}\binom{k}{l}\binom{n+m-l-k}{3 m-3 l+2}\biggr)\,.
\end{align*}  
Hence, we have the following.   

\begin{theorem}  
For $1\le m\le\fl{\frac{n}{3}}-1$, we have 
\begin{align*}  
&\mathfrak Z_n^\star(\zeta_n;m,3)\\
&=-\sum_{j=0}^{m-1}\frac{1}{n^2}\biggl(\sum_{l=0}^{\fl{\frac{m-j+1}{2}}}\sum_{k=l}^{m-j-l+1}\frac{3^{m-j-l-k+1}(-2)^{k-l}}{m-j-l+1}\binom{m-j-l+1}{k}\binom{k}{l}\binom{n+m-j-l-k}{3 m-3 j-3 l+2}\\
&\qquad +\left(\binom{n-1}{3 m-3 j+2}+\binom{n-1}{m-j}\right)\frac{(-1)^{m-j}}{m-j+1}\biggr)\mathfrak Z_n^\star(\zeta_n;j,3)
\end{align*}
with $\mathfrak Z_n^\star(\zeta_n;0,3)=1$.   
\label{th:qmozeta3}
\end{theorem}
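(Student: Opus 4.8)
The plan is to follow the same route that produced Theorems~\ref{th:qmozeta1} and~\ref{th:qmozeta2} for $s=1$ and $s=2$: convert the generating-function identity (\ref{eq:qmzsv11}) into a linear recurrence by comparing coefficients of $X^m$. Specializing (\ref{eq:qmzsv11}) to $s=3$ gives
\[
\sum_{m=0}^{n-1}\mathfrak Z_n^\star(\zeta_n;m,3)X^m=-X n^3\prod_{i=1}^3\frac{1}{\alpha_i^n-1}\,,
\]
where $\alpha_1,\alpha_2,\alpha_3$ are the roots of $(1-Y)^3-X$ as in (\ref{eq:alpha}). Equivalently,
\[
\left(\sum_{m=0}^{n-1}\mathfrak Z_n^\star(\zeta_n;m,3)X^m\right)\prod_{i=1}^3(\alpha_i^n-1)=-X n^3\,,
\]
so once $\prod_{i=1}^3(\alpha_i^n-1)$ is expanded as a power series in $X$, everything reduces to a convolution identity. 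Since the expansion will exhibit $nX$ as a common factor of $\prod_{i=1}^3(\alpha_i^n-1)$, dividing through by $-Xn^3$ recasts the identity in the clean form $n^2=\bigl(\sum_m \mathfrak Z_n^\star(\zeta_n;m,3)X^m\bigr)\cdot G(X)$ for an explicit series $G(X)$.

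The heart of the argument is therefore to compute $\prod_{i=1}^3(\alpha_i^n-1)$ explicitly, and for this I would use the logarithmic generating function $f(X,Y)=\sum_{n\ge 1}\tfrac{Y^n}{n}\prod_{i=1}^3(\alpha_i^n-1)$. The point is that $f$ linearizes the product: expanding $\prod_i(\alpha_i^n-1)$ into the elementary symmetric functions of the subset-products $\alpha_{i_1}\cdots\alpha_{i_l}$ and summing the resulting logarithmic series turns $f$ into $(-1)^{s-1}\log\bigl(\prod_{l=0}^3 F_{3,l}(X,Y)^{(-1)^l}\bigr)$, with the $F_{3,l}$ of (\ref{def:ff}). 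These are available in closed form from (\ref{eq:alpha}), namely $F_{3,0}=1-Y$, $F_{3,1}=(1-Y)^3+XY^3$, $F_{3,2}=(1-Y)^3-3XY^2-(X^2-2X)Y^3$, and $F_{3,3}=1-Y+XY$. Substituting these, expanding each $\log(1+\,\cdot\,)$ as a power series, using the negative-binomial expansion of $(1-Y)^{-3n}$ together with the binomial expansion of the factor $\bigl(3+(X-2)Y\bigr)^n$ coming from $F_{3,2}$, and finally extracting the coefficient of $Y^n$, yields the explicit triple sum for $\prod_{i=1}^3(\alpha_i^n-1)$; the floor $\fl{n/3}$ enters because the $F_{3,1}$-contribution is carried by $Y^{3n}$.

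With that expansion in hand the conclusion is immediate. Substituting it into the identity of the first paragraph and multiplying out gives $n^2=\bigl(\sum_m \mathfrak Z_n^\star(\zeta_n;m,3)X^m\bigr)G(X)$; a short direct check shows the constant term of $G$ equals $n^2$, which both confirms the normalization $\mathfrak Z_n^\star(\zeta_n;0,3)=1$ and supplies the prefactor $1/n^2$. For $1\le m\le\fl{n/3}-1$ the left-hand coefficient of $X^m$ vanishes, so the convolution reads $0=n^2\,\mathfrak Z_n^\star(\zeta_n;m,3)+\sum_{j=0}^{m-1}\mathfrak Z_n^\star(\zeta_n;j,3)\,g_{m-j}$, where $g_{m-j}$ is the coefficient of $X^{m-j}$ in $G$. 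Solving for the $j=m$ term produces exactly the stated recurrence. I expect the main obstacle to be precisely the bookkeeping of the previous paragraph: aligning the $X$-degree with the $Y^n$-coefficient across three nested sums, re-indexing correctly, and tracking the floor ranges and the signs through the simultaneous binomial and negative-binomial expansions is where essentially all the effort lies, whereas the final coefficient comparison and the isolation of the $j=m$ term are routine.
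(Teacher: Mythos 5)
Your proposal follows essentially the same route as the paper: specializing the generating function (\ref{eq:qmzsv11}) to $s=3$, computing $\prod_{i=1}^3(\alpha_i^n-1)$ via the logarithmic series $f(X,Y)=(-1)^{s-1}\log\bigl(\prod_{l=0}^3 F_{3,l}(X,Y)^{(-1)^l}\bigr)$ with the same closed forms for $F_{3,1},F_{3,2},F_{3,3}$, and then extracting coefficients from the resulting convolution identity $n^2=\bigl(\sum_m\mathfrak Z_n^\star(\zeta_n;m,3)X^m\bigr)G(X)$. The approach and all key intermediate objects match the paper's proof, and your check that the constant term of $G$ equals $n^2$ is consistent with the stated normalization.
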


By applying this formula repeatedly, we can get explicit expressions of $\mathfrak Z_n^\star(\zeta_n;m,3)$ ($m=1,2,3,4$) as follows.  
\begin{align*} 
\mathfrak Z_n^\star(\zeta_n;1,3)&=-\frac{(n-1)(n-3)}{8}\,,\\
\mathfrak Z_n^\star(\zeta_n;2,3)&=-\frac{(n+1)(n-1)(n^4-650 n^2+3780 n-5291)}{12\cdot 7!}\,,\\
\mathfrak Z_n^\star(\zeta_n;3,3)&=\frac{(n+1)(n-1)(n-3)(8 n^5+4 n^4-2005 n^3+6985 n^2+11357 n-36509)}{60\cdot 8!}\,,\\
\mathfrak Z_n^\star(\zeta_n;4,3)&=\frac{(n+1)(n-1)}{2\cdot 15!}(703 n^{10}-1109042 n^8+4324320 n^7+165698599 n^6-1101079980 n^5\\
&\quad -545336011 n^4+15460525080 n^3-18165129202 n^2-47055628620 n+76385318153)\,. 
\end{align*}


\section*{Acknowledgement}  

This work was partly supported by JSPS KAKENHI Grant Number 24K22835.

\end{document}